\renewcommand\baselinestretch{1.3}
\begin{document}

\title[Strongly Rad-clean Matrices over Commutative Local Rings]{Strongly Rad-clean Matrices over Commutative Local Rings}
\author{Huanyin Chen}
\address{\emph{Huanyin Chen}\\ Department of Mathematics, Hangzhou Normal University\\
Hangzhou 310036, China} \email {huanyinchen@aliyun.com}

\author{Handan Kose}
\address{\emph{Handan Kose}\\Department of Mathematics, Ahi Evran University\\ Kirsehir,
Turkey} \email{handankose@gmail.com}

\author{Yosum Kurtulmaz$^*$}
\address{\emph{Yosum Kurtulmaz}\\
\small{Department of Mathematics, Bilkent University\\ Ankara,
Turkey}} \email{yosum@fen.bilkent.edu.tr}

\thanks{$^*$Corresponding author: Yosum Kurtulmaz}

\newtheorem{thm}{Theorem}[section]
\newtheorem{lem}[thm]{Lemma}
\newtheorem{prop}[thm]{Proposition}
\newtheorem{cor}[thm]{Corollary}
\newtheorem{exs}[thm]{Examples}
\newtheorem{df}[thm]{Definition}
\newtheorem{nota}{Notation}
\newtheorem{rem}[thm]{Remark}
\newtheorem{ex}[thm]{Example}

\renewcommand\baselinestretch{1.3}

\begin{abstract}
An element $a\in R$ is strongly rad-clean provided that there exists an idempotent
$e\in R$ such that $a-e\in U(R)$,  $ae=ea$ and $eae\in J(eRe)$. In
this article, we investigate strongly rad-clean matrices over a
commutative local ring. We completely determine when a $2\times 2$
matrix over a commutative local ring is strongly rad-clean. Application to the matrices
over power-series is also given.

\vspace{2mm} \noindent {\bf 2010 Mathematics Subject Classification:} 16S50, 16U99.

\noindent {\bf Keywords}: Strongly clean matrix; Strongly rad-clean
matrix; Local ring; Power-series.
\end{abstract}
\maketitle

\section{Introduction}
An element $a\in R$ is \emph{strongly clean} provided
that it is the sum of an idempotent and a unit that commutes. A
ring $R$ is \emph{strongly clean} provided that every element in
$R$ is strongly clean. A ring $R$ is local if it has only one
maximal right ideal. As is well known, a ring $R$ is local if and
only if for any $x\in R$, $x$ or $1-x$ is invertible. Strongly
clean matrices over commutative local rings was extensively
studied by many authors from very different view points (cf. [1-3]
and [8]). Recently, a related cleanness of triangular matrix rings
over abelian rings was studied by Diesl et al. (cf. [7]). In fact,
every triangular matrix ring over a commutative local ring is
strongly rad-clean (cf. [6]).

Following Diesl, we say that $a\in R$ is
\emph{strongly rad-clean} provided that there exists an idempotent
$e\in R$ such that $a-e\in U(R),\ ae=ea$ and $eae\in J(eRe)$ (cf.
[6]). A ring $R$ is \emph{strongly rad-clean} provided that every
element in $R$ is strongly rad-clean. Strongly rad-clean rings
form a natural subclass of strongly clean rings which have stable
range one (cf. [4]). Let $M$ be a right $R$-module, and let
$\varphi\in end_R(M)$. Then we include a relevant diagram to
reinforce the theme of direct sum decompositions:
$$\begin{array}{cccccc}
M&=&A&\bigoplus &B&\\
&&\varphi\downarrow\cong &&\downarrow&\varphi \\
M&=&A&\bigoplus &B&
\end{array}$$
If such diagram holds we call this is an AB-decomposition for
$\varphi$. It turns out by [2, Lemma 40] that $\varphi$ is
strongly $\pi$-regular if and only if there is an AB-decomposition
with $\varphi|_{B}\in N(end(B))$ (the set of nilpotent
elements).\\
Further, $\varphi$ is strongly rad-clean if and only if there is
an \emph{AB-decomposition} with $\varphi|_{B}\in J(end(B))$ (the
Jacobson radical of $end(B)$). Thus, strong rad-cleanness can be
seen as a natural extension of strong $\pi$-regularity. In [2,
Theorem 12], the authors gave a criterion to characterize when a
square matrix over a commutative local ring is strongly clean. We
extend this result to strongly rad-clean matrices over a
commutative local ring. We completely determine when a $2\times 2$
matrix over a commutative local ring is strongly rad-clean. Application to the matrices
over power-series is also studied.

Throughout, all rings are commutative with an identity and all
modules are unitary left modules. Let $M$ be a left $R$-module. We
denote the endomorphism ring of $M$ by $end(M)$ and the
automorphism ring of $M$ by $aut(M)$, respectively. The characteristic polynomial of $A$ is the polynomial
$\chi(A)=det(tI_n-A)$. We always use
$J(R)$ to denote the Jacobson radical and $U(R)$ is the set of invertible elements of a ring $R$. $M_2(R)$
stands for the ring of all $2\times 2$ matrices over $R$, and
$GL_2(R)$ denotes the 2-dimensional general linear group of $R$.

\section{Strongly rad-clean 2x2 matrices over a commutative
local ring}

In this section, we study the structure of strongly rad-clean elements in various situations related
to ordinary ring extensions which have roles in ring theory. We start with a well known characterization of strongly rad-clean element in the endomorphism ring of a module $M$.

\begin{lem} \label{Lemma 1}\ \
Let $E=end(_RM)$, and let $\alpha\in E$. Then the following are equivalent:\end{lem}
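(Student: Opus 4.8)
The plan is to translate the element-theoretic definition of strong rad-cleanness in $E=end(_RM)$ into the language of direct sum decompositions of $M$, running the same argument as in the strongly $\pi$-regular case [2, Lemma 40] with the Jacobson radical in place of the nilpotents. Concretely, I want to show that $\alpha$ being strongly rad-clean is equivalent to the existence of an \emph{AB-decomposition} $M=A\oplus B$ for $\alpha$ (so $A,B$ are $\alpha$-invariant and $\alpha|_A\in aut(A)$) with the extra condition $\alpha|_B\in J(end(B))$.

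For the forward direction, suppose $\alpha$ is strongly rad-clean, witnessed by an idempotent $e\in E$ with $\alpha-e\in U(E)=aut(M)$, $\alpha e=e\alpha$, and $e\alpha e\in J(eEe)$. Put $A=(1-e)M$ and $B=eM$, so $M=A\oplus B$. Since $e$ commutes with $\alpha$, both $A$ and $B$ are $\alpha$-invariant, and $\alpha$ restricts to $\alpha|_A\in end(A)$ and $\alpha|_B\in end(B)$. Under the canonical ring isomorphism $end(B)\cong eEe$, the element $\alpha|_B$ corresponds to $e\alpha e$, hence $\alpha|_B\in J(end(B))$. Moreover $\alpha-e$ commutes with $e$, so it preserves the decomposition $M=A\oplus B$; on $A$ it acts as $\alpha|_A$ (because $e$ annihilates $A$), and since an automorphism of $M$ that respects both summands restricts to an automorphism on each, we get $\alpha|_A\in aut(A)$. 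Thus we have the desired AB-decomposition.

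Conversely, given $M=A\oplus B$ with $A,B$ both $\alpha$-invariant, $\alpha|_A\in aut(A)$ and $\alpha|_B\in J(end(B))$, let $e\in E$ be the projection of $M$ onto $B$ along $A$. Then $e^2=e$, and $\alpha$-invariance of $A$ and $B$ forces $\alpha e=e\alpha$. Identifying $eEe$ with $end(B)$ gives $e\alpha e=\alpha|_B\in J(eEe)$. Finally $\alpha-e$ preserves both summands: on $A$ it is $\alpha|_A$, an automorphism; on $B$ it is $\alpha|_B-1_B$, which is invertible in $end(B)$ precisely because $\alpha|_B\in J(end(B))$. A direct sum of automorphisms is an automorphism, so $\alpha-e\in U(E)$, and $\alpha$ is strongly rad-clean.

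The one point requiring genuine care is the bookkeeping around the corner ring: one must check that the isomorphism $eEe\cong end(eM)$ carries $e\alpha e$ to $\alpha|_{eM}$ and carries $J(eEe)$ onto $J(end(eM))$, so that the radical condition is transported faithfully in both directions. Everything else — that an automorphism respecting a decomposition restricts to automorphisms of the summands, and that $\alpha|_B-1_B$ is a unit whenever $\alpha|_B\in J(end(B))$ — is routine, but it is exactly what makes the passage between ``$\alpha-e\in U(E)$'' and the pair of conditions ``$\alpha|_A\in aut(A)$'' and ``$\alpha|_B-1_B\in U(end(B))$'' work.
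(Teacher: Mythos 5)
Your proof is correct. The paper does not argue this lemma at all --- it simply cites [6, Proposition 4.1.2] --- and what you have written is precisely the standard Peirce-decomposition argument behind that citation: take $P=(1-e)M$, $Q=eM$ in one direction and the projection onto $Q$ along $P$ in the other, transporting the radical condition through the corner-ring isomorphism $eEe\cong end(eM)$. You correctly isolate the only delicate points (that this isomorphism carries $J(eEe)$ onto $J(end(eM))$, that a unit commuting with $e$ restricts to automorphisms of both summands because its inverse also commutes with $e$, and that $\alpha|_B-1_B$ is invertible when $\alpha|_B$ lies in the radical), so nothing is missing.
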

\begin{enumerate}
\item [(1)]{\it $\alpha \in E$ is strongly rad-clean.}
\vspace{-.5mm}
\item [(2)]{\it There exists a direct sum decomposition
$M=P\oplus Q$ where $P$ and $Q$ are $\alpha$-invariant, and
$\alpha|_P\in aut(P)$ and $\alpha|_Q\in J\big(end(Q)\big)$.}
\vspace{-.5mm}
\end{enumerate}
\begin{proof} See [6, Proposition 4.1.2].\end{proof}

\begin{lem}\label{Lemma 3} Let $R$ be a ring, let $M$ be a
left $R$-module. Suppose that $x,y,a,b\in end(_RM)$ such that
$xa+yb=1_M, xy=yx=0, ay=ya$ and $xb=bx$. Then $M=ker(x)\oplus
ker(y)$ as left $R$-modules.
\end{lem}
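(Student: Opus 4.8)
The plan is to exhibit explicit $R$-linear maps realizing the direct sum $M = \ker(x) \oplus \ker(y)$, using the hypotheses to check that the relevant submodules are complementary and that the candidate maps are the idempotent projections onto them. First I would observe that $e := ya \in \operatorname{end}({}_RM)$ is a natural candidate for the projection onto $\ker(x)$ along $\ker(y)$: from $xy = 0$ we get $x(ya) = (xy)a = 0$, so $\operatorname{im}(ya) \subseteq \ker(x)$, and symmetrically $f := xb$ has $\operatorname{im}(xb) \subseteq \ker(y)$ because $yx = 0$. The identity $xa + yb = 1_M$ should first be massaged into a more symmetric form; multiplying $xa + yb = 1_M$ on the left by $y$ and using $yx = 0$, $ay = ya$ gives $y^2 b = y$, and similarly $x^2 a = x$, which are the facts that will make $e$ and $f$ idempotent.

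Next I would verify that $e + f = 1_M$, or equivalently that $ya + xb = 1_M$. This is where the commutation hypotheses $ay = ya$ and $xb = bx$ together with $xy = yx = 0$ do the real work: starting from $1_M = xa + yb$ and trying to swap the factors, one writes $xa = x a (x a + y b) = x a x a + x a y b$; using $ay = ya$ and then $xy = 0$ kills the cross term, leaving $xa = (xa)^2$, so $xa$ is itself idempotent, and likewise $yb$ is idempotent with $(xa)(yb) = 0 = (yb)(xa)$. Then $xa$ and $yb$ are orthogonal idempotents summing to $1_M$, hence $M = \operatorname{im}(xa) \oplus \operatorname{im}(yb)$. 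The last point is to identify these images with the two kernels: $\operatorname{im}(yb) \subseteq \ker(x)$ was noted above, and conversely if $x m = 0$ then $m = xa\,m + yb\,m = a x\, m + yb\, m$ — wait, $xa$ need not equal $ax$, so instead I would argue $m = (xa)m + (yb)m$ and show $(xa)m \in \ker(x)\cap\operatorname{im}(xa)$ while using idempotency to conclude $(xa)m = 0$ forces $m = (yb)m \in \operatorname{im}(yb)$; the symmetric statement handles $\ker(y) = \operatorname{im}(xa)$.

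The step I expect to be the main obstacle is precisely the bookkeeping in the previous paragraph: the hypotheses give commutativity only for the mixed pairs $(a,y)$ and $(x,b)$, not for $(x,a)$ or $(y,b)$, so one cannot freely rearrange $xa$ into $ax$. The trick is to expand each occurrence of $1_M$ as $xa + yb$ and repeatedly apply $xy = yx = 0$ to annihilate cross terms before any reordering is needed; done carefully this shows $xa$ and $yb$ are a complete set of orthogonal idempotents, after which the module decomposition is formal. A clean way to package it: set $P = \operatorname{im}(xa)$ and $Q = \operatorname{im}(yb)$, prove $P \subseteq \ker(y)$ and $Q \subseteq \ker(x)$ directly from $yx = 0$ and $xy = 0$, prove the reverse inclusions using that $xa + yb = 1_M$ acts as the identity, and invoke the orthogonal-idempotent decomposition to finish.
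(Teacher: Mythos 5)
There is a genuine gap, and it sits exactly where you flagged ``the main obstacle.'' Your computation that $xa$ and $yb$ are orthogonal idempotents summing to $1_M$ is correct (the cross terms die via $xayb=xyab=0$ and $ybxa=yxba=0$), and it yields $M=\operatorname{im}(xa)\oplus\operatorname{im}(yb)$ with $\operatorname{im}(xa)\subseteq\ker(y)$ and $\operatorname{im}(yb)\subseteq\ker(x)$, hence $M=\ker(x)+\ker(y)$. But the step you leave open --- that $\ker(x)\cap\operatorname{im}(xa)=0$, equivalently that the sum is direct --- does not follow from idempotency, and under your reading of products (ordinary function composition, so that $(xa)m=x(a(m))$) the lemma is actually \emph{false}. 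Take $M=V\oplus W$ with $V$ free on $e_1,e_2,\dots$ and $W$ free on $w$; let $x$ be the backward shift on $V$ (so $x(e_1)=0$, $x(e_{i+1})=e_i$) and zero on $W$, let $a$ be the forward shift on $V$ and zero on $W$, and let $y=b$ be the projection onto $W$. Then $xa+yb=1_M$, $xy=yx=0$, $ay=ya=0$ and $xb=bx=0$, yet $e_1\in\ker(x)\cap\ker(y)$. So no amount of bookkeeping within your framework can close the gap.

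What you are missing is a convention: the paper (following [2], from which this lemma is quoted) writes endomorphisms of a left module on the right of their arguments, so that $xa$ means ``first $x$, then $a$''; this is visible in the proof of Theorem \ref{Theorem 4}, where the authors write $(x)A$ and $(x)AX$. With that convention the statement is a two-line computation and your idempotent machinery is unnecessary: for any $m\in M$ one has $m=((m)x)a+((m)y)b$, where the first summand lies in $\ker(y)$ because $xay=xya=0$ and the second lies in $\ker(x)$ because $ybx=yxb=0$, giving $M=\ker(x)+\ker(y)$; and if $(m)x=(m)y=0$, the same identity gives $m=(0)a+(0)b=0$, so the sum is direct. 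Note that the hypotheses $ay=ya$ and $xb=bx$ are tailored to make the \emph{sum} work, while the triviality of the intersection is immediate --- the exact mirror image of your attempt, where the sum was easy and the intersection unreachable.
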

\begin{proof} See [2, Lemma 11].
\end{proof}

A commutative ring $R$ is {\it projective-free} if every finitely generated projective $R$-module is free. Evidently, every commutative local ring is projective-free. We now derive

\begin{lem}\label{Lemma 2} Let $R$ be projective-free. Then $A\in M_2(R)$ is strongly rad-clean if and only if
$A\in GL_2(R)$, or $A\in J\big(M_2(R)\big)$, or $A$ is similar to $diag(\alpha,\beta)$ with $\alpha\in J(R)$ and $\beta\in U(R)$.\end{lem}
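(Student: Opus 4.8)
The plan is to use Lemma \ref{Lemma 1} with $M = R^2$ and $E = \mathrm{end}(R^2) \cong M_2(R)$. Suppose first that $A$ is strongly rad-clean. By Lemma \ref{Lemma 1} there is a decomposition $R^2 = P \oplus Q$ with $P, Q$ being $A$-invariant, $A|_P \in \mathrm{aut}(P)$ and $A|_Q \in J(\mathrm{end}(Q))$. Since $R$ is projective-free, $P$ and $Q$ are free, and since they are summands of $R^2$ their ranks add to $2$. I would then split into three cases according to $(\mathrm{rk}\,P, \mathrm{rk}\,Q) \in \{(2,0),(0,2),(1,1)\}$. In the first case $Q = 0$ so $A = A|_P \in \mathrm{aut}(R^2)$, i.e. $A \in GL_2(R)$. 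In the second case $P = 0$ so $A = A|_Q \in J(\mathrm{end}(R^2)) = J(M_2(R))$. In the mixed case, choosing bases of $P$ and $Q$ gives a basis of $R^2$ in which $A$ is block-diagonal $\mathrm{diag}(\alpha,\beta)$ with $\alpha = A|_P$ a unit of $R$ (as $\mathrm{end}(P) \cong R$) and $\beta = A|_Q \in J(\mathrm{end}(Q)) = J(R)$; the change of basis is exactly a similarity. Here I should be slightly careful about which block is the unit: the statement writes $\mathrm{diag}(\alpha,\beta)$ with $\alpha \in J(R)$, $\beta \in U(R)$, but conjugating by the permutation matrix swaps the two, so either convention is fine.

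For the converse, I would handle the three cases in turn. If $A \in GL_2(R)$, take $e = 0$: then $A - e = A \in U(M_2(R))$, $Ae = eA = 0$, and $eAe = 0 \in J(0) = 0$, so $A$ is strongly rad-clean. If $A \in J(M_2(R))$, take $e = I_2$: then $A - e = A - I_2$, which is a unit since $A \in J(M_2(R))$; $Ae = eA = A$; and $eAe = A \in J(M_2(R)) = J(I_2 M_2(R) I_2)$, so again $A$ is strongly rad-clean. Finally, if $A$ is similar to $D = \mathrm{diag}(\alpha,\beta)$ with $\alpha \in J(R)$, $\beta \in U(R)$, it suffices by conjugation-invariance of strong rad-cleanness to show $D$ is strongly rad-clean; take $e = \mathrm{diag}(1,0)$, which commutes with $D$, satisfies $D - e = \mathrm{diag}(\alpha - 1, \beta) \in GL_2(R)$ since $\alpha - 1 \in U(R)$ and $\beta \in U(R)$, and $eDe = \mathrm{diag}(\alpha,0) \in J(eM_2(R)e) \cong J(R)$ because $\alpha \in J(R)$.

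The routine verifications above are genuinely routine; the one point that needs real care—and which I expect to be the main obstacle—is the first case, extracting from the abstract AB-decomposition of Lemma \ref{Lemma 1} a concrete similarity over $R$. This is where projective-freeness is essential: a priori $P$ and $Q$ are only finitely generated projective summands of $R^2$, and it is the hypothesis that $R$ is projective-free that lets me pick $R$-bases of $P$ and $Q$, assemble them into an $R$-basis of $R^2$, and thereby realize the change of basis as an element of $GL_2(R)$ rather than merely an abstract isomorphism of modules. I would also note that $\mathrm{end}_R(P) \cong R$ when $P$ is free of rank one, so that $A|_P$ being an automorphism really does mean $\alpha \in U(R)$, and likewise $J(\mathrm{end}(Q)) \cong J(R)$ in the rank-one case, which is what pins down $\beta \in J(R)$.
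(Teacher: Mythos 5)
Your proposal is correct and is essentially the paper's argument: the paper phrases the forward direction by conjugating the idempotent $E$ to $\mathrm{diag}(0,0)$, $\mathrm{diag}(1,1)$ or $\mathrm{diag}(1,0)$ using projective-freeness, which is exactly your $P\oplus Q$ decomposition from Lemma~\ref{Lemma 1} expressed in matrix form, and the converse (taking $e=0$, $e=I_2$, or $e=\mathrm{diag}(1,0)$ after conjugation) is identical. The one point you flagged as needing care --- using projective-freeness to turn the abstract summands into free modules of complementary rank and hence the decomposition into a genuine similarity over $R$ --- is indeed the crux, and you handle it the same way the paper does.
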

\begin{proof}
$\Longrightarrow$ Write $A=E+U, E^2=E, U\in GL_2(R), EA=AE\in J(M_2(R))$. Since $R$ is projective-free,
there exists $P\in GL_n(R)$ such that $PEP^{-1}=diag(0,0), diag(1,1)$ or $diag(1,0)$.
Then $(i)$ $PAP^{-1}=PUP^{-1}$; hence, $A\in GL_2(R)$, $(ii)$ $(PAP^{-1})diag(1,1)=diag(1,1)(PAP^{-1})\in J(M_2(R)$, and so $A\in J(M_2(R))$. $(3)$
$(PAP^{-1})diag(1,0)=diag(1,0)(PAP^{-1})\in J(M_2(R)$ and $PAP^{-1}-diag(1,0)\in GL_2(R)$. Hence, $PAP^{-1}=\left(
\begin{array}{cc}
a&b\\
c&d
\end{array}
\right)$ with $a\in J(R),b=c=0$ and $d\in UR)$. Therefore $A$ is similar to $diag(\alpha,\beta)$ with $\alpha\in J(R)$ and $\beta\in U(R)$.

$\Longleftarrow$ If $A\in GL_2(R)$ or $A\in J\big(M_2(R)\big)$, then $A$ is strongly rad-clean.
We now assume that $A$ is similar to $diag(\alpha,\beta)$ with $\alpha\in J(R)$ and $\beta\in U(R)$. Then $A$ is similar to
$\left(
\begin{array}{cc}
1&0\\
0&0\end{array} \right)+ \left(
\begin{array}{cc}
\alpha -1&0\\
0&\beta\end{array} \right)$ where $$\begin{array}{c}
\left(
\begin{array}{cc}
\alpha -1&0\\
0&\beta\end{array} \right)\in GL_2(R), \left(
\begin{array}{cc}
\alpha&0\\
0&\beta\end{array} \right)\left(
\begin{array}{cc}
1&0\\
0&0\end{array} \right)\in J\big(M_2(R)\big)\\
\left(
\begin{array}{cc}
\alpha -1&0\\
0&\beta\end{array} \right)\left(
\begin{array}{cc}
1&0\\
0&0\end{array} \right)=\left(
\begin{array}{cc}
1&0\\
0&0\end{array} \right)\left(
\begin{array}{cc}
\alpha -1&0\\
0&\beta\end{array} \right).
\end{array}$$ Therefore $A\in M_2(R)$ is strongly rad-clean.\end{proof}

\begin{thm} \label{Theorem 4} Let $R$ be projective-free. Then $A\in M_2(R)$ is strongly rad-clean if and only if
\begin{enumerate}
\item [(1)]{\it $A\in GL_2(R)\big)$, or}
\vspace{-.5mm}
\item [(2)]{\it $A\in J\big(M_2(R)\big)$, or}
\vspace{-.5mm}
\item [(3)]{\it $\chi(A)=0$ has roots $\alpha\in U(R),\beta\in J(R)$.}
\vspace{-.5mm}
\end{enumerate}
\end{thm}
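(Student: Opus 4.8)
The plan is to obtain Theorem~\ref{Theorem 4} directly from Lemma~\ref{Lemma 2}; all that is really needed is to recognize that the third alternative of that lemma --- that $A$ be similar to $diag(\alpha,\beta)$ with $\alpha\in J(R)$ and $\beta\in U(R)$ --- is equivalent, modulo the first two alternatives, to the statement that the characteristic polynomial $\chi(A)$ has a root in $U(R)$ together with a root in $J(R)$.

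For the ``only if'' part, I would simply invoke Lemma~\ref{Lemma 2}: if $A\in GL_2(R)$ or $A\in J\big(M_2(R)\big)$ we are in (1) or (2), and otherwise $A$ is similar to $D=diag(\alpha_0,\beta_0)$ with $\alpha_0\in J(R)$, $\beta_0\in U(R)$. Since similar matrices share the same characteristic polynomial, $\chi(A)=\chi(D)=(t-\alpha_0)(t-\beta_0)$, so $\chi(A)=0$ has the root $\beta_0\in U(R)$ and the root $\alpha_0\in J(R)$, i.e. (3) holds.

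For the ``if'' part, (1) and (2) give strong rad-cleanness at once (take $e=0$, resp. $e=I_2$), so the content is in (3). Suppose $\chi(A)=0$ has roots $\alpha\in U(R)$ and $\beta\in J(R)$. Since $\alpha$ is a unit and $\beta\in J(R)$, the element $\alpha-\beta$ is a unit; feeding $\chi(\alpha)=\chi(\beta)=0$ through the invertibility of $\alpha-\beta$ pins the trace of $A$ down to $\alpha+\beta$ and then its determinant to $\alpha\beta$, so $\chi(A)=(t-\alpha)(t-\beta)$. By the Cayley--Hamilton theorem this yields $(A-\alpha I_2)(A-\beta I_2)=0$ in $M_2(R)$. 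I would then set $M=R^2$ and, in $end(_RM)$, take $x=A-\alpha I_2$, $y=A-\beta I_2$, $a=(\beta-\alpha)^{-1}I_2$, $b=(\alpha-\beta)^{-1}I_2$; one checks $xy=yx=0$, $xa+yb=1_M$, and $a,b$ are scalars, so $ay=ya$ and $xb=bx$. Lemma~\ref{Lemma 3} then produces $M=ker(x)\oplus ker(y)$. Both summands are $A$-invariant; $A$ acts on $ker(x)$ as multiplication by the unit $\alpha$, hence as an automorphism, and on $ker(y)$ as multiplication by $\beta$. As $ker(y)$ is a direct summand of $R^2$, it is finitely generated projective, so $end_R\big(ker(y)\big)$ is module-finite over $R$ and therefore $J(R)\,end_R\big(ker(y)\big)\subseteq J\big(end_R(ker(y))\big)$; in particular multiplication by $\beta$ lies in $J\big(end(ker(y))\big)$. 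Lemma~\ref{Lemma 1} now gives that $A$ is strongly rad-clean.

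I do not expect a genuine obstacle here. The argument rests entirely on the single observation that $\alpha-\beta\in U(R)$, which simultaneously forces the factorization of $\chi(A)$ and supplies the scalars used to turn the factored Cayley--Hamilton identity into the hypotheses of Lemma~\ref{Lemma 3}. The one point deserving a sentence of care is the degenerate situation where $ker(x)$ or $ker(y)$ vanishes: then $A=\beta I_2\in J\big(M_2(R)\big)$ or $A=\alpha I_2\in GL_2(R)$, respectively, and the conclusion still stands.
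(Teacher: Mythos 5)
Your proof is correct and follows essentially the same route as the paper: the forward direction via Lemma~\ref{Lemma 2}, and the converse by factoring $\chi(A)=(t-\alpha)(t-\beta)$, applying Cayley--Hamilton together with Lemma~\ref{Lemma 3} to split $R^2$ into the two kernels, and concluding with Lemma~\ref{Lemma 1}. You in fact make explicit two details the paper glosses over, namely why the two roots force the factorization of $\chi(A)$ (via the invertibility of $\alpha-\beta$) and why scalar multiplication by the root in $J(R)$ lies in $J\big(end(ker(y))\big)$.
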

\begin{proof} $\Longrightarrow$ By Lemma \ref{Lemma 2},  $A\in GL_2(R)$, or $A\in
J\big(M_2(R)\big)$, or $A$ is similar to a matrix $\left(
\begin{array}{cc}
\alpha&0\\
0&\beta\end{array} \right)$, where $\alpha\in J(R)$ and $\beta\in U(R)$. Then $\chi(A)=(x-\alpha)(x-\beta)$ has roots
$\alpha\in U(R),\beta\in J(R)$.

$\Longleftarrow$ If $(1)$ or $(2)$ holds, then $A\in M_2(R)$ is
strongly rad-clean. If $(3)$ holds, we assume that $\chi(A)=(t-\alpha)(t-\beta)$. Choose $X=A-\alpha I_2$ and $Y=A-\beta I_2$. Then
$$\begin{array}{c}
X(\beta-\alpha)^{-1}I_2-Y(\beta-\alpha)^{-1}I_2=I_2,\\
XY=YX=0, X(\beta-\alpha)^{-1}I_2=(\beta-\alpha)^{-1}I_2X,\\
(\beta-\alpha)^{-1}I_2Y=Y(\beta-\alpha)^{-1}I_2.
\end{array}$$ By virtue of Lemma \ref{Lemma 3}, we have $2R=ker(X)\oplus ker(Y)$. For any $x\in ker(X)$, we have $(x)AX=(x)XA=0$, and
so $(x)A\in ker(X)$. Then $ker(X)$ is $A$-invariant.
Similarly, $ker(Y)$ is $A$-invariant. For any $x\in ker(X)$, we have $0=(x)X=(x)\big(A-\alpha I_2\big)$; hence,
$(x)A=(x)\alpha I_2$. By hypothesis, we have $A|_{ker(X)}\in J\big(end(ker(X))\big)$. For any $y\in ker(Y)$, we
prove that
$$0=(y)Y=(y)\big(A-\beta I_2\big).$$ This implies that $(y)A=(y)\big(\beta I_2\big)$. Obviously,
$A|_{ker(Y)}\in aut\big(ker(Y)\big)$. Therefore $A\in M_2(R)$ is strongly rad-clean by Lemma \ref{Lemma 1}.
\end{proof}

We have accumulated all the information necessary to prove the following.

\begin{thm}\label{Theorem 5} Let $R$ be a commutative
local ring, and let $A\in M_2(R)$. Then the following are
equivalent:
\begin{enumerate}
\item [(1)]{\it $A\in M_2(R)$ is
strongly rad-clean.} \vspace{-.5mm}
\item [(2)]{\it $A\in GL_2(R)$ or $A\in J\big(M_2(R)\big)$, or $trA\in U(R)$ and the quadratic equation $x^2+x=-\frac{detA}{tr^2A}$ has a root in $J(R)$.}
\vspace{-.5mm}
\item [(3)]{\it $A\in GL_2(R)$ or $A\in J\big(M_2(R)\big)$, or $trA\in U(R), detA\in J(R)$ and the quadratic equation $x^2+x=\frac{detA}{tr^2A-4detA}$ is
solvable.}
\end{enumerate}
\end{thm}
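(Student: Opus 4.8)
The plan is to reduce everything to Theorem \ref{Theorem 4} and then to rewrite the third alternative there in the two equivalent arithmetic forms (2) and (3) of the present statement. Since every commutative local ring is projective-free, Theorem \ref{Theorem 4} applies to $R$, so (1) is equivalent to: $A\in GL_2(R)$, or $A\in J(M_2(R))$, or $\chi(A)=t^2-(trA)t+detA$ has roots $\alpha\in U(R)$ and $\beta\in J(R)$. It therefore suffices to prove that, over a commutative local $R$, the clause ``$\chi(A)$ has a unit root and a radical root'' is equivalent to each of the clauses ``$trA\in U(R)$ and $x^2+x=-\frac{detA}{tr^2A}$ has a root in $J(R)$'' and ``$trA\in U(R)$, $detA\in J(R)$, and $x^2+x=\frac{detA}{tr^2A-4detA}$ is solvable.''

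First I would record two elementary facts about a monic quadratic $f(t)=t^2-pt+q$ over a commutative local ring: (a) if $f$ has a root in $R$ then $f=(t-\gamma)(t-\delta)$ with $\gamma+\delta=p$ and $\gamma\delta=q$; (b) if moreover $q\in J(R)$ then, $R$ being local, one of $\gamma,\delta$ lies in $J(R)$, and then the other lies in $U(R)$ provided $p\in U(R)$. Applying this to $\chi(A)$ shows that ``$\chi(A)$ has roots $\alpha\in U(R),\,\beta\in J(R)$'' is equivalent to ``$trA\in U(R)$, $detA\in J(R)$, and $\chi(A)$ has a root in $R$'' (forward: $trA=\alpha+\beta\in U(R)$, $detA=\alpha\beta\in J(R)$; backward: fact (b)). When $trA\in U(R)$, the invertible substitution $t=-(trA)x$ converts $\chi(A)(t)=0$ into $(trA)^2\bigl(x^2+x+\tfrac{detA}{tr^2A}\bigr)=0$, so $\chi(A)$ has a root in $R$ iff $x^2+x=-\tfrac{detA}{tr^2A}$ has a root in $R$; and if $detA\in J(R)$, the two roots of the latter multiply to $\tfrac{detA}{tr^2A}\in J(R)$ and sum to $-1$, so one lies in $J(R)$. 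Since a root in $J(R)$ of $x^2+x=-\tfrac{detA}{tr^2A}$ conversely forces $detA\in J(R)$ (as $tr^2A\in U(R)$), this yields (1)$\Leftrightarrow$(2).

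For (2)$\Leftrightarrow$(3) I would avoid completing the square — illegitimate when $2\notin U(R)$ — and instead use a fractional-linear substitution valid in all characteristics. In either situation one has $trA\in U(R)$ and $detA\in J(R)$ (explicit in (3), derived above in (2)), so $tr^2A-4detA\in U(R)$; put $c=\tfrac{detA}{tr^2A}$ and $d=\tfrac{detA}{tr^2A-4detA}$, both in $J(R)$, and note the one-line identity $d=\tfrac{c}{1-4c}$. As in the previous paragraph, $z^2+z=d$ has a root in $R$ iff it has one in $J(R)$. On $J(R)$ (where $1+2x\in U(R)$) the map $x\mapsto\tfrac{-x}{1+2x}$ sends a root of $x^2+x=-c$ to a root of $z^2+z=d$, because $(1+2x)^2=1-4(x^2+x)=1-4c$; the formula $z\mapsto\tfrac{-z}{1+2z}$ is its inverse, using $(1+2z)^2=1+4d$ and $\tfrac{d}{1+4d}=c$. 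Hence $x^2+x=-c$ has a root in $J(R)$ iff $z^2+z=d$ is solvable, which closes the chain (1)$\Leftrightarrow$(2)$\Leftrightarrow$(3).

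The one genuinely delicate point is this passage between the two quadratic equations: because $2$ need not be a unit, the classical discriminant computation is unavailable, so one must locate the substitution $x\mapsto -x/(1+2x)$ and verify the identities $(1+2x)^2=1-4c$ and $d/(1+4d)=c$. Everything else is routine bookkeeping with the principle ``unit $+$ radical $=$ unit'' in a commutative local ring.
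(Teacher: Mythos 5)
Your proposal is correct and follows essentially the same route as the paper: both reduce to Theorem \ref{Theorem 4}, normalize the characteristic polynomial by dividing its roots by the (invertible) trace to obtain $x^2+x=-\det A/\mathrm{tr}^2A$, and pass between the two quadratics via the substitution $x\mapsto -x/(1+2x)$, which is exactly the paper's $a\mapsto a(2a-1)^{-1}$ up to the sign convention $x=-y$. Your version is organized a bit more cleanly (explicitly isolating the local-ring root-splitting facts and the mutual inverse identities $(1+2x)^2=1-4c$, $d/(1+4d)=c$), but the underlying argument is the same.
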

\begin{proof} $(1)\Rightarrow (2)$ Assume that $A\not\in GL_2(R)$ and $A\not\in
J\big(M_2(R)\big)$. By virtue of Theorem \ref{Theorem 4},
$trA\in U(R)$ and the characteristic polynomial $\chi(A)$ has a
root in $J(R)$ and a root in $U(R)$. According to Lemma \ref{Lemma 2},
$A$ is similar to $\left(
\begin{array}{cc}
\lambda&0\\
0&\mu
\end{array} \right)$, where $\lambda\in J(R), \mu\in U(R)$. Clearly,
$y^2-(\lambda+\mu)y+\lambda\mu=0$ has a root $\lambda$ in $J(R)$.
Hence so does the equation
$$(\lambda+\mu)^{-1}y^2-y=-(\lambda+\mu)^{-1}\lambda\mu.$$ Set
$z=(\lambda+\mu)^{-1}y$. Then
$$(\lambda+\mu)z^2-(\lambda+\mu)z=-(\lambda+\mu)^{-1}\lambda\mu.$$ That is,
$z^2-z=-(\lambda+\mu)^{-2}\lambda\mu.$ Consequently,
$z^2-z=-\frac{detA}{tr^2A}$ has a root in $J(R)$. Let $x=-z$. Then $x^2+x=-\frac{detA}{tr^2A}$ has a root in $J(R)$.

$(2)\Rightarrow (3)$ By hypothesis, we prove that the equation
$y^2-y=-\frac{detA}{tr^2A}$ has a root $a\in J(R)$. Assume that $trA\in U(R)$. Then
$\big(a(2a-1)^{-1}\big)^2-\big(a(2a-1)^{-1}\big)=\frac{detA}{tr^2A\cdot
\big(4(a^2-a)+1\big)} =\frac{detA}{tr^2A\cdot
\big(-4(trA)^{-2}detA+1\big)}=\frac{detA}{tr^2A-4detA}. $
Therefore the equation $y^2-y=\frac{detA}{tr^2A-4detA}$ is
solvable. Let $x=-y$. Then $x^2+x=\frac{detA}{tr^2A-4detA}$ is
solvable.

$(3)\Rightarrow (1)$ Suppose $A\not\in GL_2(R)$ and $A\not\in
J\big(M_2(R)\big)$. Then $trA\in U(R), detA\in J(R)$ and the
equation $x^2+x=\frac{detA}{tr^2A-4detA}$ has a root. Let $y=-x$.
Then $y^2-y\frac{detA}{tr^2A-4detA}$ has a root $a\in R$.
Clearly, $b:=1-a\in R$ is a root of this equation. As $a^2-a\in
J(R)$, we see that either $a\in J(R)$ or $1-a\in J(R)$. Thus,
$2a-1=1-2(1-a)\in U(R)$. It is easy to verify that
$\big(a(2a-1)^{-1}trA\big)^2-trA\cdot
\big(a(2a-1)^{-1}trA\big)+detA=-\frac{tr^2A\cdot
(a^2-a)}{4(a^2-a)+1}+detA=0.$ Thus the equation $y^2-trA\cdot
y+detA=0$ has roots $a(2a-1)^{-1}trA$ and $b(2b-1)^{-1}trA$. Since
$ab\in J(R)$, we see that $a+b=1$ and either $a\in J(R)$ or $b\in
J(R)$. Therefore $y^2-trA\cdot y+detA=0$ has a root in $U(R)$ and a
root in $J(R)$. Since $R$ is a commutative local ring, it is projective-free. By virtue of Theorem \ref{Theorem 4}, we obtain the result.\end{proof}

\begin{cor} \label{Corollary 6} Let $R$ be a commutative
local ring, and let $A\in M_2(R)$. Then the following are
equivalent:
\begin{enumerate}
\item [(1)]{\it $A\in M_2(R)$ is
strongly clean.} \vspace{-.5mm}
\item [(2)]{\it $I_2-A\in GL_2(R)$ or $A\in M_2(R)$ is
strongly rad-clean.}
\end{enumerate}
\end{cor}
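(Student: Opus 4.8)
The plan is to derive both implications from Lemma \ref{Lemma 2}, using nothing more than the fact that a commutative local ring is projective-free (so Lemma \ref{Lemma 2} applies) and the normal form of a $2\times 2$ idempotent over such a ring. No computation beyond what already appears in the proofs of Lemma \ref{Lemma 2} and Theorem \ref{Theorem 4} should be required.

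For $(2)\Rightarrow(1)$ I would argue directly from the definitions. If $I_2-A\in GL_2(R)$, then $A=I_2+(A-I_2)$ exhibits $A$ as the sum of the idempotent $I_2$ and the unit $A-I_2=-(I_2-A)$, and these commute, so $A$ is strongly clean. If instead $A$ is strongly rad-clean, then by definition there is an idempotent $E$ with $A-E\in GL_2(R)$ and $AE=EA$, so $A$ is strongly clean as well.

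The content lies in $(1)\Rightarrow(2)$. Starting from a strongly clean decomposition $A=E+U$ with $E^2=E$, $U\in GL_2(R)$, $EU=UE$, I would conjugate $E$ into one of the forms $diag(0,0)$, $diag(1,1)$, $diag(1,0)$ using that $R$ is projective-free, noting that an idempotent similar to $0$ (resp.\ $I_2$) is actually equal to $0$ (resp.\ $I_2$). If $E=0$, then $A=U\in GL_2(R)$, so $A$ is strongly rad-clean by Lemma \ref{Lemma 2} and $(2)$ holds. If $E=I_2$, then $A-I_2=U\in GL_2(R)$, i.e.\ $I_2-A\in GL_2(R)$, so $(2)$ holds. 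In the remaining case, after conjugating I may assume $E=diag(1,0)$; since $U$ commutes with $diag(1,0)$ it is forced to be diagonal, say $U=diag(u,v)$, and localness of $R$ gives $u,v\in U(R)$, so $A$ is similar to $diag(1+u,v)$. Again because $R$ is local, either $1+u\in U(R)$, in which case $A\in GL_2(R)$, or $1+u\in J(R)$, in which case $A$ is similar to $diag(\alpha,\beta)$ with $\alpha=1+u\in J(R)$ and $\beta=v\in U(R)$; either way Lemma \ref{Lemma 2} shows $A$ is strongly rad-clean and $(2)$ holds.

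I do not anticipate a real obstacle: the argument is a short case split routed through Lemma \ref{Lemma 2}. The two small points to handle carefully are the observation that a unit commuting with $diag(1,0)$ must be diagonal (a one-line matrix identity) and the remark that an idempotent similar to $0$ or to $I_2$ equals $0$ or $I_2$, so that the strongly clean data survives the conjugation; after these, the localness of $R$ supplies the dichotomy $1+u\in U(R)$ versus $1+u\in J(R)$ and the proof closes.
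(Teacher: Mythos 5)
Your proposal is correct, but it takes a genuinely different route from the paper. The paper proves $(1)\Rightarrow(2)$ by citing the known classification of strongly clean $2\times 2$ matrices over a commutative local ring ([3, Corollary 16.4.33]): $A\in GL_2(R)$, or $I_2-A\in GL_2(R)$, or $trA\in U(R)$, $detA\in J(R)$ and $x^2-x=\frac{detA}{tr^2A-4detA}$ is solvable; it then substitutes $x\mapsto -x$ to match condition (3) of Theorem \ref{Theorem 5} and concludes from that theorem. You instead work directly from the strongly clean decomposition $A=E+U$: using that a commutative local ring is projective-free you conjugate $E$ to $0$, $I_2$ or $diag(1,0)$, observe that $U$ commutes with $E$ (since $UE=(A-E)E=E(A-E)=EU$) and hence becomes $diag(u,v)$ with $u,v\in U(R)$ in the third case, and then the local dichotomy on $1+u$ lands you in one of the alternatives of Lemma \ref{Lemma 2}, with the $E=I_2$ case producing exactly the extra alternative $I_2-A\in GL_2(R)$. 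Your argument is self-contained (no appeal to the external reference or to the trace--determinant criteria of Theorem \ref{Theorem 5}), essentially replays the normal-form analysis already used in Lemma \ref{Lemma 2}, and has the virtue of explaining structurally why the alternative $I_2-A\in GL_2(R)$ must be added: it is the case $E=I_2$, where $EAE=A$ need not lie in the radical. The paper's proof is shorter given the machinery it has already built. The two small points you flag (an idempotent conjugate to $0$ or $I_2$ equals $0$ or $I_2$, and a matrix commuting with $diag(1,0)$ is diagonal) are indeed routine and close the argument.
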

\begin{proof}
$(2)\Rightarrow (1)$ is trivial.

$(1)\Rightarrow (2)$ In view of [3, Corollary 16.4.33], $A\in
GL_2(R)$, or $I_2-A\in GL_2(R)$ or $trA\in U(R), detA\in J(R)$ and
the quadratic equation \\  $x^2-x=\frac{detA}{tr^2A-4detA}$ is
solvable. Hence $x^2+x=\frac{detA}{tr^2A-4detA}$ is
solvable. According to Theorem \ref{Theorem 5}, we complete the
proof.
\end{proof}

\begin{cor} \label{Corollary 7} Let $R$ be a commutative
local ring. If $\frac{1}{2}\in R$, then the following are
equivalent:
\begin{enumerate}
\item [(1)]{\it $A\in M_2(R)$ is strongly rad-clean.} \vspace{-.5mm}
\item [(2)]{\it $A\in GL_2(R)$ or $A\in J\big(M_2(R)\big)$, or $trA\in U(R), detA\in J(R)$ and $tr^2A-4detA$ is square.}
\vspace{-.5mm}
\end{enumerate}
\end{cor}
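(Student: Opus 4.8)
The plan is to read this off from Theorem~\ref{Theorem 5}(3), exploiting the invertibility of $2$ to complete the square. By that theorem, $A$ is strongly rad-clean if and only if $A\in GL_2(R)$, or $A\in J\big(M_2(R)\big)$, or $trA\in U(R)$, $detA\in J(R)$ and the quadratic $x^2+x=\frac{detA}{tr^2A-4detA}$ is solvable in $R$; so the only thing to prove is that, under the hypotheses $trA\in U(R)$ and $detA\in J(R)$, this last equation is solvable if and only if $tr^2A-4detA$ is a square in $R$.

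First I would record two elementary facts about our commutative local ring $R$ (with $\frac12\in R$): (i) since $tr^2A\in U(R)$ and $4detA\in J(R)$, the element $tr^2A-4detA$ is a unit; and (ii) for $a\in U(R)$ and $r\in R$, the element $a^2r$ is a square in $R$ iff $r$ is a square (if $a^2r=s^2$ then $r=(a^{-1}s)^2$, and conversely), while for a unit $u$ one has that $u$ is a square iff $u^{-1}$ is a square. Next, completing the square: since $\frac12\in R$, for any $c\in R$ the equation $x^2+x=c$ has a root in $R$ precisely when $\big(x+\frac12\big)^2=c+\frac14$ has one, i.e.\ precisely when $c+\frac14$ is a square in $R$. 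Taking $c=\frac{detA}{tr^2A-4detA}$ and simplifying,
$$c+\frac14=\frac{4detA+tr^2A-4detA}{4(tr^2A-4detA)}=\frac{tr^2A}{4(tr^2A-4detA)}=\Big(\frac{trA}{2}\Big)^2(tr^2A-4detA)^{-1}.$$
By fact (ii), since $\big(\frac{trA}{2}\big)^2$ is a unit square, $c+\frac14$ is a square in $R$ iff $(tr^2A-4detA)^{-1}$ is, and by (ii) again together with (i) this holds iff $tr^2A-4detA$ is a square in $R$.

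Stringing these equivalences together converts the third alternative of Theorem~\ref{Theorem 5}(3) into the third alternative of statement (2), so (1)$\Leftrightarrow$(2). I do not expect a genuine obstacle here: the computation is short, and the only points that require attention are confirming via (i) that $tr^2A-4detA$ is actually a unit (so that passing between $u$ and $u^{-1}$ being squares is legitimate) and being careful in (ii) to multiply only by \emph{unit squares} when reducing the ``square'' condition.
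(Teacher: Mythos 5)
Your proof is correct and is essentially the paper's argument: both hinge on completing the square via the identity $(2x+1)^2=4(x^2+x)+1$, which is legitimate because $\frac12\in R$ and $tr^2A-4\det A$ is a unit. The paper merely packages the two directions with explicit witnesses (the square root $trA\cdot(2a-1)^{-1}$ from a root $a$, and the root $-\frac12u^{-1}(trA+u)$ from a square root $u$), whereas you phrase it as a single chain of equivalences; the content is the same.
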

\begin{proof} $(1)\Rightarrow (2)$ According to Theorem \ref{Theorem 5}, $A\in GL_2(R)$ or $A\in J\big(M_2(R)\big)$, or $trA\in U(R), detA\in J(R)$ and the quadratic equation $x^2-x=\frac{detA}{tr^2A-4detA}$ is
solvable. If $a\in R$ is the root of the equation, then
$(2a-1)^2=4(a^2-a)+1=\frac{tr^2A}{tr^2A-4detA}\in U(R)$. As in the
proof of Theorem \ref{Theorem 5} , $2a-1\in U(R)$. Therefore
$tr^2A-4detA=\big(trA\cdot (2a-1)^{-1}\big)^2$.

 $(2)\Rightarrow (1)$ If $trA\in U(R), detA\in J(R)$ and $tr^2A-4detA=u^2$ for some $u\in R$, then $u\in U(R)$ and the equation $x^2+x=\frac{detA}{tr^2A-4detA}$ has a root $-\frac{1}{2}u^{-1}(trA+u)$. By virtue of Theorem \ref{Theorem 5}, $A\in M_2(R)$ is strongly rad-clean.
 \end{proof}

 \vskip4mm Every strongly rad-clean matrix over a ring is strongly clean. But there exist strongly clean matrices over a commutative local ring which is not strongly rad-clean as the following shows.

\begin{ex} \label{Example 8} \em Let $R={\mathbb Z}_4$, and let $A=
\left(
\begin{array}{cc}
2&3\\
0&2
\end{array}
\right)\in M_2(R)$. $R$ is a commutative local ring. Then $A=\left(
\begin{array}{cc}
1&0\\
0&1
\end{array}
\right)+\left(
\begin{array}{cc}
1&3\\
0&1
\end{array}
\right)$ is a strongly clean decomposition. Thus $A\in M_2(R)$ is strongly clean.
If $A\in M_2(R)$ is strongly rad-clean, there exist an idempotent $E\in M_2(R)$ and an invertible $U\in M_2(R)$ such that $A=E+U, EA=AE$ and $EAE\in J\big(M_2(R)\big)$. Hence, $AU=A(A-E)=(A-E)A=UA$, and then $E=A-U\in GL_2(R)$ as $A^4=0$. This implies that $E=I_2$, and so $EAE=A\not\in J\big(M_2(R)\big)$, as $J(R)=2R$. This gives a contradiction. Therefore $A\in M_2(R)$ is not strongly rad-clean.
\end{ex}

Following Chen, $a\in R$ is {\it strongly J-clean} provided
that there exists an
idempotent $e\in R$ such that $ea=ae$ and $a-e\in J(R)$ (cf. [3]). Every uniquely
clean ring is strongly J-clean (cf. [9]). We have

\begin{prop} \label{Proposition 16} Every strongly J-clean
element in a ring is strongly rad-clean.
\end{prop}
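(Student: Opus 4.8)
The plan is to convert the witnessing idempotent for strong J-cleanness into a witnessing idempotent for strong rad-cleanness by passing to its complement. Suppose $a\in R$ is strongly J-clean, say $ea=ae$ and $a-e\in J(R)$ for some idempotent $e\in R$, and write $j:=a-e\in J(R)$. I would set $f:=1-e$. Then $f$ is an idempotent, and $af=fa$ follows immediately from $ae=ea$; in particular $(a-f)f=f(a-f)$, so $f$ commutes with $a-f$ as well.

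Next I would check that $a-f\in U(R)$. Computing, $a-f=a-1+e=(e+j)-1+e=(2e-1)+j$. Since $(2e-1)^2=4e^2-4e+1=1$, the element $2e-1$ is a unit (indeed its own inverse); as $j\in J(R)$, it follows that $a-f=(2e-1)+j\in U(R)$.

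Finally I would verify the radical condition $faf\in J(fRf)$. Because $a$ commutes with $e$, hence with $f$, we have $faf=f^2a=fa=(1-e)(e+j)=(1-e)j\in J(R)$; on the other hand $faf$ plainly lies in the corner ring $fRf$. Invoking the standard identity $J(fRf)=fRf\cap J(R)$ for a corner of a ring with identity, we conclude $faf\in J(fRf)$. Combining the three paragraphs, $f$ is an idempotent with $a-f\in U(R)$, $af=fa$, and $faf\in J(fRf)$, so $a$ is strongly rad-clean.

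The argument is essentially routine; the only two points that require a moment's thought are the observation that $2e-1$ is always a unit (this is exactly what makes the complement of the J-clean idempotent work) and the appeal to $J(fRf)=fRf\cap J(R)$ to push membership in $J(R)$ down into the corner. I do not anticipate any genuine obstacle.
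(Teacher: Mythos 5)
Your proof is correct, and it actually takes a cleaner route than the paper's. You start from a strongly J-clean decomposition of $a$ itself, $a=e+j$ with $j\in J(R)$ and $ae=ea$, and pass to the complementary idempotent $f=1-e$: then $a-f=(2e-1)+j$ is a unit because $2e-1$ is an involution and $j\in J(R)$, $faf=(1-e)j\in fRf\cap J(R)=J(fRf)$, and commutativity is inherited from $ae=ea$. All three verifications are sound; the identity $J(fRf)=fJ(R)f=fRf\cap J(R)$ you invoke is the standard corner-ring fact and is the same one the paper uses. The paper instead writes a strongly J-clean decomposition of $a+1$, say $a+1=e+w$ with $e^2=e$ and $w\in J(R)$, and keeps that same idempotent $e$, obtaining $a=e+(w-1)$ with $w-1\in U(R)$ and $eae=ewe\in J(eRe)$. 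As literally written, that argument requires $a+1$ (rather than $a$) to be strongly J-clean, which is not what the element-level hypothesis provides; your complement trick avoids this issue entirely and works directly from the stated hypothesis. So your version is not only valid but arguably repairs a slip in the published proof.
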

\begin{proof} For if $a\in R$, there exist $e, w\in R$ such that
$a+1=e+w$ with $e^2=e$, $w\in J(R)$ and $ae=ea$. Multiplying the
last equation from the right and from the left by $e$ we have
$eae=ewe\in eJ(R)e=J(eRe)$. But $a=e+(w-1)$, where $w-1\in U(R)$.
This completes the proof.
\end{proof} 

\vskip4mm Following Cui and Chen, an element $a\in R$ is quaspolar if there exists an idempotent $e\in comm(a)$ such that 
$a+e\in U(R)$ and $ae\in R^{qnil}$. Obviously, $A~\mbox{is strongly J-clean}\Longrightarrow A~\mbox{is strongly rad-clean}\Longrightarrow A~\mbox{is quasipolar}.$ But the converses are not true, as the following shows:

\begin{ex} \label{Example 18}\em $(1)$ Let $R$ be a commutative
local ring and $A= \left(
\begin{array}{cc}
1 &1\\
1&0
\end{array}
\right)$ be in $M_2(R)$. Since $A\in GL_2(R)$, by Lemma \ref{Lemma 2}, it is
strongly rad-clean but is not strongly J-clean, as $I_2-A\not\in J(M_2(R))$.

$(2)$ Let $R={\mathbb Z}_{(3)}$ and $A= \left(
\begin{array}{cc}
2 &1\\
-1&1
\end{array}
\right)$. Then $tr A=3\in J(R)$ and $det A=3\in J(R)$. Hence $A$ is
quasipolar by [5, Theorem 2.6]. Note that $tr A \notin U(R)$,
$A\notin GL_2(R)$ and $A\notin J(M_2(R))$. Thus, $A$ is not
strongly rad-clean, in terms of Corollary \ref{Corollary 7}.
\end{ex}

\vskip4mm Let $R$ be a
commutative local ring. If $A\in M_2(R)$ is strongly rad-clean,
we claim that $A\in GL_2(R)$, or $A\in J\big(M_2(R)\big)$, or $A$ has an
invertible trace. If $A\not\in
GL_2(R)$ and $A\not\in  J\big(M_2(R)\big)$, it follows from Lemma
\ref{Lemma 2} that $A$ is similar to a matrix $\left(
\begin{array}{cc}
1+w_1&0\\
0&w_2\end{array} \right)$ where $1+w_1\in J(R), w_2\in U(R)$.
Therefore $tr(A)=(1+w_1)+w_2\in U(R)$, and we are done.

We set $B_{12}(a)=\left(
\begin{array}{cc}
1&a\\
0&1 \end{array} \right)$ and $B_{21}(a)=\left(
\begin{array}{cc}
1&0\\
a&1 \end{array} \right)$. We have

\begin{thm} \label{Theorem 10} Let $R$ be a commutative local
ring. Then the following are equivalent:
\begin{enumerate}
\item [(1)]{\it Every $A\in M_2(R)$ with invertible trace is strongly rad-clean.} \vspace{-.5mm}
\item [(2)]{\it For any $\lambda\in J(R), \mu\in U(R)$, the quadratic equation $x^2+\mu x+\lambda=0$ is solvable.}
\vspace{-.5mm}
\end{enumerate}
\end{thm}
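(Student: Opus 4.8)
The plan is to deduce both implications from the structure theorems already proved, chiefly Theorem \ref{Theorem 4}, together with the fact that a commutative local ring is projective-free.

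For $(1)\Rightarrow(2)$, given $\lambda\in J(R)$ and $\mu\in U(R)$ I would form the companion matrix
$$A=\left(\begin{array}{cc} 0&-\lambda\\ 1&-\mu\end{array}\right)\in M_2(R),$$
whose characteristic polynomial is $\chi(A)=t^2+\mu t+\lambda$. Here $trA=-\mu\in U(R)$, whence $A\notin J\big(M_2(R)\big)$, and $detA=\lambda\in J(R)$, whence $A\notin GL_2(R)$ since $R$ is local. By hypothesis $A$ is strongly rad-clean, so Theorem \ref{Theorem 4} leaves only its third alternative available: $\chi(A)=0$ has a root in $U(R)$ and a root in $J(R)$. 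In particular $x^2+\mu x+\lambda=0$ has a root in $R$, which is what (2) asserts.

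For $(2)\Rightarrow(1)$ I would start from an arbitrary $A\in M_2(R)$ with $trA\in U(R)$. If $A\in GL_2(R)$ or $A\in J\big(M_2(R)\big)$ there is nothing to do; since $trA\in U(R)$ already rules out $A\in J\big(M_2(R)\big)$, the case that remains is $A\notin GL_2(R)$, i.e.\ $detA\in J(R)$ because $R$ is local. I would then write $\chi(A)=t^2-trA\cdot t+detA$ and apply (2) with $\mu:=-trA\in U(R)$ and $\lambda:=detA\in J(R)$ to obtain a root $\alpha\in R$ of $\chi(A)$. Dividing the monic polynomial $\chi(A)$ by the monic linear factor $t-\alpha$ yields $\chi(A)=(t-\alpha)(t-\beta)$ with $\alpha+\beta=trA$ and $\alpha\beta=detA$. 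As $\alpha\beta\in J(R)$ and $R$ is local, one of $\alpha,\beta$ lies in $J(R)$, say $\beta\in J(R)$, and then $\alpha=trA-\beta\in U(R)$. Thus $\chi(A)=0$ has a root in $U(R)$ and a root in $J(R)$, and since $R$, being a commutative local ring, is projective-free, Theorem \ref{Theorem 4} gives that $A$ is strongly rad-clean.

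The step I expect to be the main obstacle is $(1)\Rightarrow(2)$: one needs a single matrix that simultaneously has an invertible trace (so that hypothesis (1) applies to it) and fails both to be invertible and to lie in $J\big(M_2(R)\big)$ (so that the first two branches of Theorem \ref{Theorem 4} are excluded and only the root condition on $\chi(A)$ survives). The companion matrix of $t^2+\mu t+\lambda$ is the natural candidate, and checking that it meets all three requirements for every admissible pair $(\lambda,\mu)$ is precisely where the hypotheses $\mu\in U(R)$ and $\lambda\in J(R)$ get used. The reverse direction is then essentially bookkeeping: realizing the characteristic polynomial of an arbitrary trace-unit matrix as an instance of the quadratic in (2) and using the local dichotomy to distribute its roots between $U(R)$ and $J(R)$; one should double-check that the factor theorem over a commutative ring legitimately produces the factorization $\chi(A)=(t-\alpha)(t-\beta)$ used above.
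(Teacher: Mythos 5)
Your proof is correct. The direction $(1)\Rightarrow(2)$ is essentially the paper's argument: the paper also tests hypothesis (1) on the companion matrix $\left(\begin{smallmatrix}0&\lambda\\1&-\mu\end{smallmatrix}\right)$ and invokes Theorem \ref{Theorem 4}; your sign adjustment in the $(1,2)$-entry actually makes the characteristic polynomial come out as exactly $t^2+\mu t+\lambda$, which is cleaner than the paper's version (where the determinant is $-\lambda$, harmless since $\lambda$ ranges over all of $J(R)$). Where you genuinely diverge is in $(2)\Rightarrow(1)$: the paper runs a five-case analysis on which entries of $A$ are units, conjugating by $B_{12}$, $B_{21}$ and permutation matrices to bring $A$ to a companion-type form before applying hypothesis (2), whereas you apply (2) directly to $\chi(A)=t^2-trA\cdot t+\det A$ (legitimate, since $-trA\in U(R)$ and, in the only nontrivial case, $\det A\in J(R)$ because $R$ is local), then use the factor theorem to write $\chi(A)=(t-\alpha)(t-\beta)$ and the local dichotomy on $\alpha\beta=\det A\in J(R)$ to place one root in $J(R)$ and the other in $U(R)$, so that Theorem \ref{Theorem 4}(3) applies. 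Your route is shorter and avoids the similarity bookkeeping entirely; the paper's case analysis buys nothing extra here, since Theorem \ref{Theorem 4} is stated purely in terms of roots of the characteristic polynomial, which is a similarity invariant. The one step worth making explicit, as you note, is that division of the monic quadratic by $t-\alpha$ over a commutative ring does yield $\beta=trA-\alpha$ with $\alpha\beta=\det A$, which is immediate and gives the Cayley--Hamilton factorization that Theorem \ref{Theorem 4}'s proof relies on.
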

\begin{proof} $(1)\Rightarrow (2)$ Let $\lambda\in J(R), \mu\in U(R)$. Choose $A=\left(
\begin{array}{cc}
0&\lambda\\
1&-\mu\end{array} \right)$. Then $A\in M_2(R)$ is strongly rad
clean. Obviously, $A\not\in GL_2(R)$ and $A\not\in
J\big(M_2(R)\big)$. In view of Theorem \ref{Theorem 4} the quadratic
polynomial $\chi(A)=x^2+\mu x+\lambda$ is solvable.

$(2)\Rightarrow (1)$ Let $A= \left(
\begin{array}{cc}
a&b\\
c&d \end{array} \right)$ with $tr(A)\in U(R)$. Case I. $c\in
U(R)$. Then
$$diag(c,1)B_{12}(-ac^{-1})AB_{12}(ac^{-1})diag(c^{-1},1)=\left(
\begin{array}{cc}
0&\lambda\\
1&\mu\end{array} \right)$$ for some $\lambda,\mu\in R$. If
$\lambda\in U(R)$, then $A\in GL_2(R)$, and so it is strongly rad-clean.
If $\lambda\in J(R)$, then $\mu\in U(R)$. Then $A$ is strongly rad-clean by Theorem 2.5.\\ Case II. $b\in U(R)$. Then
$$\left(
\begin{array}{cc}
0&1\\
1&0\end{array} \right)A\left(
\begin{array}{cc}
0&1\\
1&0\end{array} \right)=\left(
\begin{array}{cc}
d&c\\
b&a\end{array} \right),$$ and the result follows from Case I.\\
Case III. $c, b\in J(R), a-d\in U(R)$. Then
$$B_{21}(-1)AB_{21}(1)= \left( \begin{array}{cc}
                           a+b & b \\ c-a+d-b & d-b \\
                         \end{array} \right)$$ where $a-d+b-c\in U(R)$; hence
the result follows from Case I. \\Case IV. $c, b\in J(R), a, d\in
U(R)$. Then
$$B_{21}(-ca^{-1})A= \left(
\begin{array}{cc}
a&b\\
0&d-ca^{-1}b\end{array} \right);$$ hence, $A\in GL_2(R)$.\\ Case
V. $c, b, a, d\in J(R)$. Then $A\in J\big(M_2(R)\big)$, and so
$tr(A)\in J(R)$, a contradiction.

Therefore $A\in M_2(R)$ with invertible trace is strongly rad-clean.\end{proof}

\begin{ex} \label{Example 11} \em Let $R={\mathbb Z}_4$. Then $R$ is a
commutative local ring. For any $\lambda\in J(R), \mu\in U(R)$, we
directly check that the quadratic equation $x^2+\mu x+\lambda=0$
is solvable. Applying Lemma \ref{Theorem 10}, every $2\times 2$ matrix over
$R$ with invertible trace is strongly rad-clean. In this case, $M_2(R)$ is not strongly rad-clean.\end{ex}

\begin{ex} \label{Example 12} Let $R=\widehat{{\mathbb Z}}_2$ be the
ring of 2-adic integers. Then every $2\times 2$ matrix with
invertible trace is strongly rad-clean.
\end{ex}
\begin{proof} Obviously, $R$ is a commutative local ring. Let
$\lambda\in J(R), \mu\in U(R)$. Then $\left(
\begin{array}{cc}
0&-\lambda\\
1&-\mu
\end{array}
\right)\in M_2(R)$ is strongly clean, by [5, Theorem 3.3]. Clearly, $det(A)=\lambda\in J(R)$. As $R/J(R)\cong {\mathbb Z}_2$, we see that
$\mu\in -1+J(R)$, and then $det(A-I_2)=\lambda+\mu+1\in J(R)$. In light of [5, Lemma 3.1],
the equation $x^2+\mu x+\lambda=0$ is solvable. This completes the proof, by Theorem \ref{Theorem 10}.
\end{proof}

\vskip4mm We note that matrix with non-invertible trace over commutative local rings maybe not strongly rad-clean. For instance, $A=\left(
\begin{array}{cc}
1&1\\
1&1\end{array} \right)\in M_2(\widehat{{\mathbb Z}}_2)$ is not
strongly rad-clean.

\section{Strongly rad-clean matrices over power series over
commutative local rings}

 We now investigate strongly rad-clean
matrices over power series over commutative local rings.

\begin{lem} \label{Lemma 13} Let $R$ be a commutative ring,
and let $A(x_1,\cdots,x_n)\in M_2\big(R[[x_1,$ $\cdots,x_n]]\big)$. Then the following
hold:
\begin{enumerate}
\item [(1)]{\it $A(x_1,\cdots,x_n)\in GL_2\big(R[[x_1,\cdots,x_n]])$ if and only if $A(0,\cdots ,0)\in GL_2(R)$.}
\vspace{-.5mm}
\item [(2)]{\it $A(x_1,\cdots,x_n)\in J\big(M_2(R[[x_1,\cdots,x_n]])\big)$ if and only if $A(0,\cdots ,0)\in J\big(M_2(R)\big)$.}
\vspace{-.5mm}
\end{enumerate}
\end{lem}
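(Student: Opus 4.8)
The plan is to push everything through the evaluation homomorphism at the origin. Write $S=R[[x_1,\cdots,x_n]]$ and let $\varphi\colon S\to R$ be the ring homomorphism $f\mapsto f(0,\cdots,0)$; it is surjective with kernel the ideal $I=(x_1,\cdots,x_n)$. The key preliminary observation is that $I\subseteq J(S)$: for any $f\in I$ and any $g\in S$ the element $1-fg$ has constant term $1$ and hence is invertible in $S$ (with inverse the power series $\sum_{k\ge 0}(fg)^k$, which is well defined because $fg$ has zero constant term), so $f\in J(S)$. Applying $\varphi$ entrywise gives a surjective ring homomorphism $\Phi\colon M_2(S)\to M_2(R)$ with $\Phi\big(A(x_1,\cdots,x_n)\big)=A(0,\cdots,0)$ and $\ker\Phi=M_2(I)$; since $J\big(M_2(S)\big)=M_2\big(J(S)\big)\supseteq M_2(I)$, the kernel of $\Phi$ lies in $J\big(M_2(S)\big)$.

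Next I would record the following standard fact and apply it with $\pi=\Phi$: if $\pi\colon T\to\overline{T}$ is a surjective ring homomorphism with $\ker\pi\subseteq J(T)$, then (i) $u\in U(T)$ if and only if $\pi(u)\in U(\overline{T})$, and (ii) $J(T)=\pi^{-1}\big(J(\overline{T})\big)$. For (i), one implication is immediate; for the converse, if $\pi(u)\in U(\overline{T})$ pick $v\in T$ with $\pi(uv)=\pi(vu)=1$, so that $uv,vu\in 1+\ker\pi\subseteq 1+J(T)\subseteq U(T)$, forcing $u\in U(T)$. Statement (ii) is the familiar identification $J(T)/\ker\pi=J\big(T/\ker\pi\big)\cong J(\overline{T})$ valid when $\ker\pi\subseteq J(T)$.

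Putting these together finishes both parts. By (i) applied to $T=M_2(S)$, $A(x_1,\cdots,x_n)\in GL_2(S)$ if and only if $\Phi\big(A(x_1,\cdots,x_n)\big)=A(0,\cdots,0)\in GL_2(R)$, which is (1). By (ii), $A(x_1,\cdots,x_n)\in J\big(M_2(S)\big)$ if and only if $A(0,\cdots,0)\in J\big(M_2(R)\big)$, which is (2). For (1) a self-contained alternative is to note that $A\in GL_2(S)$ iff $\det A\in U(S)$, and a power series over $R$ is a unit exactly when its constant term $\det\big(A(0,\cdots,0)\big)$ is a unit of $R$.

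The argument is essentially formal, so I do not expect a serious obstacle; the only step that actually uses the power-series hypothesis is the containment $I\subseteq J(S)$. The mild points of care are invoking $J\big(M_2(S)\big)=M_2\big(J(S)\big)$ and, because $M_2$ is noncommutative, using both $uv$ and $vu$ in the unit argument of the auxiliary lemma.
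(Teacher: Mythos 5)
Your proof is correct and rests on the same key fact as the paper's argument, namely that power series with zero constant term lie in $J\big(R[[x_1,\cdots,x_n]]\big)$, so that invertibility and radical membership are detected by the constant term. The paper establishes (1) by the determinant computation you mention as a self-contained alternative and disposes of (2) with ``immediate from (1)''; your packaging via the surjection $\Phi$ with $\ker\Phi\subseteq J\big(M_2(R[[x_1,\cdots,x_n]])\big)$ is a clean and slightly more general way to obtain both parts at once, and in particular it makes explicit the unit-lifting and $J(T)=\pi^{-1}\big(J(\overline{T})\big)$ facts that the paper leaves unstated for part (2).
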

\begin{proof} $(1)$ We suffice to prove for $n=1$. If $A(x_1)\in
GL_2\big(R[[x_1]])$, it is easy to verify that $A(0)\in GL_2(R)$.
Conversely, assume that $A(0)\in GL_2(R)$. Write $$A(x_1)= \left(
\begin{array}{cc}
\sum\limits_{i=0}^{\infty}a_ix_1^i&\sum\limits_{i=0}^{\infty}b_ix_1^i\\
\sum\limits_{i=0}^{\infty}c_ix_1^i&\sum\limits_{i=0}^{\infty}d_ix_1^i
\end{array}
\right),$$ where $A(0)= \left(
\begin{array}{cc}
a_0 &b_0 \\
c_0&d_0
\end{array}
\right).$ We note that the
determinant of $A(x_1)$ is $a_0d_0-c_0b_0+x_1f(x_1)$, which is a unit plus an
element of the radical of $R[[x_1]]$. Thus, $A(x_1)\in
GL_2\big(R[[x_1]])$, as required.

$(2)$ It is immediate from $(1)$.
\end{proof}

\begin{thm} \label{Theorem 14} Let $R$ be a commutative local
ring, and let $A(x_1,\cdots,x_n)\in M_2\big(R[[x_1,$ $\cdots,x_n]])$. Then the following are
equivalent:
\begin{enumerate}
\item [(1)]{\it $A(x_1,\cdots,x_n)\in M_2\big(R[[x_1,\cdots,x_n]])$ is strongly rad-clean.}
\vspace{-.5mm}
\item [(2)]{\it $A(x_1,\cdots,x_n)\in M_2\big(R[[x_1,\cdots,x_n]]/(x_1^{m_1}\cdots x_n^{m_n})\big)$ is strongly rad-clean.}\vspace{-.5mm}
\item [(3)]{\it $A(0,\cdots ,0)\in M_2(R)$ is strongly rad-clean.}
\vspace{-.5mm}
\end{enumerate}
\end{thm}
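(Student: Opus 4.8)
\end{thm}

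\begin{proof}
The strategy is to prove the cycle of implications $(1)\Rightarrow(2)\Rightarrow(3)\Rightarrow(1)$. Throughout, set $S=R[[x_1,\dots,x_n]]$ and $\bar{S}=S/(x_1^{m_1}\cdots x_n^{m_n})$. Since a quotient of a commutative local ring is again commutative local, both $S$ and $\bar{S}$ are commutative local rings, and there are natural surjective ring homomorphisms $S\twoheadrightarrow\bar{S}\twoheadrightarrow R$, the first killing $x_1^{m_1}\cdots x_n^{m_n}$ and the second sending each $x_i$ to $0$. The kernel of the composite $S\to R$ is $(x_1,\dots,x_n)\subseteq J(S)$.

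For $(1)\Rightarrow(2)$ and $(2)\Rightarrow(3)$ I would first record the elementary fact that a surjective ring homomorphism $\varphi\colon T_1\to T_2$ carries strongly rad-clean elements to strongly rad-clean elements: if $a=e+u$ with $e^2=e$, $u\in U(T_1)$, $ae=ea$ and $eae\in J(eT_1e)$, then $\varphi(a)=\varphi(e)+\varphi(u)$ with $\varphi(e)$ idempotent, $\varphi(u)\in U(T_2)$, $\varphi(a)\varphi(e)=\varphi(e)\varphi(a)$, and $\varphi(e)\varphi(a)\varphi(e)=\varphi(eae)\in J\big(\varphi(e)T_2\varphi(e)\big)$, the last because the restriction $eT_1e\to\varphi(e)T_2\varphi(e)$ is again a surjective ring homomorphism and surjections send Jacobson radicals into Jacobson radicals. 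Applying this to the induced surjections $M_2(S)\to M_2(\bar{S})$ and $M_2(\bar{S})\to M_2(R)$ yields both implications at once.

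The substance is $(3)\Rightarrow(1)$. Assuming $A(0,\dots,0)\in M_2(R)$ is strongly rad-clean, I would invoke Theorem \ref{Theorem 5} for the local ring $R$, which leaves three cases. If $A(0,\dots,0)\in GL_2(R)$ then $A(x_1,\dots,x_n)\in GL_2(S)$ by Lemma \ref{Lemma 13}$(1)$, hence strongly rad-clean; if $A(0,\dots,0)\in J\big(M_2(R)\big)$ then $A(x_1,\dots,x_n)\in J\big(M_2(S)\big)$ by Lemma \ref{Lemma 13}$(2)$, hence strongly rad-clean (with idempotent $I_2$). In the remaining case $tr\,A(0,\dots,0)\in U(R)$ and the equation $x^2+x=-\frac{det\,A(0,\dots,0)}{tr^2A(0,\dots,0)}$ has a root $a_0\in J(R)$; in particular $det\,A(0,\dots,0)=-(a_0^2+a_0)\,tr^2A(0,\dots,0)\in J(R)$. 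Since evaluation at $0$ is a ring homomorphism whose kernel lies in $J(S)$, it follows that $tr\,A(x_1,\dots,x_n)\in U(S)$ and $det\,A(x_1,\dots,x_n)\in J(S)$, so $\lambda:=-\frac{det\,A(x_1,\dots,x_n)}{tr^2A(x_1,\dots,x_n)}\in J(S)$ and $\lambda$ reduces to $a_0^2+a_0$ at $0$. By Theorem \ref{Theorem 5} it then suffices to produce a root of $y^2+y=\lambda$ lying in $J(S)$.

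This lifting step is the only genuine obstacle, and it is a Hensel-type argument. Since $S$ is complete in the $(x_1,\dots,x_n)$-adic topology and the polynomial $f(y)=y^2+y-\lambda$ satisfies $f(a_0)=a_0^2+a_0-\lambda\in(x_1,\dots,x_n)$ while $f'(a_0)=2a_0+1\in U(R)\subseteq U(S)$ (as $a_0\in J(R)$), the Newton iterates $a_{k+1}=a_k-f(a_k)(2a_k+1)^{-1}$ are well defined, form a Cauchy sequence in the $(x_1,\dots,x_n)$-adic topology, and converge to a root $a\in S$ of $f$ with $a\equiv a_0\pmod{(x_1,\dots,x_n)}$. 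As $a_0\in J(R)\subseteq J(S)$ and $(x_1,\dots,x_n)\subseteq J(S)$, we get $a\in J(S)$, so $y^2+y=\lambda$ has a root in $J(S)$ and Theorem \ref{Theorem 5} gives that $A(x_1,\dots,x_n)$ is strongly rad-clean over $S$. If one prefers to avoid invoking completeness, the same conclusion follows by induction on $n$: for $n=1$ the coefficients of a root $\sum_i a_ix^i\in R[[x]]$ are forced recursively, each step needing only that $1+2a_0$ be a unit, and the inductive step uses that $R[[x_1,\dots,x_{n-1}]]$ is again a commutative local ring. Together with the easy directions, this closes the cycle.
\end{proof}
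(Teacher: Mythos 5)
Your proof is correct and follows essentially the same route as the paper: the easy implications come from reducing modulo an ideal contained in the radical, and the substantive implication $(3)\Rightarrow(1)$ is in both cases a Hensel-type lift of a root of a quadratic whose derivative at the approximate root is a unit (the paper lifts a root of $\chi\big(A(x)\big)$ by the explicit coefficient recursion you mention as an alternative, while you lift a root of $y^2+y=\lambda$ by Newton iteration and route the conclusion through Theorem \ref{Theorem 5} rather than Theorem \ref{Theorem 4}). The only noteworthy difference is that you actually justify $(1)\Rightarrow(2)\Rightarrow(3)$ via the observation that surjective ring homomorphisms preserve strong rad-cleanness, a step the paper dismisses as obvious.
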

\begin{proof} $(1)\Rightarrow (2)$ and $(2)\Rightarrow (3)$ are obvious.

$(3)\Rightarrow (1)$ It will suffice to prove for $n=1$.
Set $x=x_1$. Clearly, $R[[x]]$ is a commutative local
ring. Since $A(0)$ is strongly clean in $M_2(R)$, it follows from
Theorem \ref{Theorem 4} that $A(0)\in GL_2(R)$, or $A(0)\in
J\big(M_2(R)\big)$, or $\chi\big(A(0)\big)$ has a root $\alpha\in
J(R)$ and a root $\beta\in U(R)$. If  $A(0)\in GL_2(R)$ or
$A(0)\in J\big(M_2(R)\big)$, in view of Lemma \ref{Lemma 13},  $A(x)\in
GL_2\big(R[[x]])$ or $A(x)\in J\big(M_2(R[[x]])\big)$. Hence, $A(x)\in
M_2\big(R[[x]]\big)$ is strongly rad-clean. Thus, we may assume
that $\chi\big(A(0)\big)=t^2+\mu t+\lambda$ has a root $\alpha\in
J(R)$ and a root $\beta\in U(R)$.

Write $\chi\big(A(x)\big)=t^2+\mu(x)t+\lambda(x)$ where
$\mu(x)=\sum\limits_{i=0}^{\infty}\mu_ix^i,
\lambda(x)=\sum\limits_{i=0}^{\infty}\lambda_ix^i\in R[[x]]$ and
$\mu_0=\mu,\lambda_0=\lambda$. Let $b_0=\alpha$. It is easy to verify that $\mu_0=\alpha+\beta\in U(R)$.
Hence, $2b_0+\mu_0\in U(R)$. Choose
$$\begin{array}{c}
b_1=(2b_0+\mu_0)^{-1}(-\lambda_1-\mu_1b_0),\\
b_2=(2b_0+\mu_0)^{-1}(-\lambda_2-\mu_1b_1-\mu_2b_0-b_1^2),\\
\vdots
\end{array}$$ Then $y=\sum\limits_{i=0}^{\infty}b_ix^i\in R[[x]]$
is a root of $\chi\big(A(x)\big)$. In addition, $y\in
J\big(R[[x]]\big)$ as $b_0\in J(R)$. Since $y^2+\mu(x)y+\lambda(x)=0$, we have
$\chi\big(A(x)\big)=(t-y)(t+y)+\mu (t-y)=(t-y)(t+y+\mu)$. Set
$z=-y-\mu$. Then $z\in U\big(R[[x]]\big)$ as $\mu\in
U\big(R[[x]]\big)$. Therefore $\chi\big(A(x)\big)$ has a root in
$J\big(R[[x]]\big)$ and a root in $U\big(R[[x]]\big)$. According
to Theorem \ref{Theorem 4}, $A(x)\in M_2\big(R[[x]])$ is strongly rad-clean,
as asserted.
\end{proof}

\begin{cor} \label{Corollary 15} Let $R$ be a commutative
local ring. Then the following are equivalent:
\begin{enumerate}
\item [(1)]{\it Every $A\in M_2(R)$ with invertible trace is strongly rad-clean.} \vspace{-.5mm}
\item [(2)]{\it Every $A(x_1,\cdots,x_n)\in M_2\big(R[[x_1,\cdots,x_n]]\big)$ with invertible trace is strongly rad-clean.}
\vspace{-.5mm}
\end{enumerate}
\end{cor}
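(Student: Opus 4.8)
The plan is to push everything through Theorem~\ref{Theorem 14}, which already identifies strong rad-cleanness of $A(x_1,\cdots,x_n)$ with strong rad-cleanness of its constant term $A(0,\cdots,0)\in M_2(R)$. The only extra fact needed is that evaluation at the origin carries matrices with invertible trace to matrices with invertible trace, and conversely. Indeed, the trace of $A(x_1,\cdots,x_n)$ is a power series in $R[[x_1,\cdots,x_n]]$ whose constant term is precisely $tr\big(A(0,\cdots,0)\big)$; since the constant-term map $R[[x_1,\cdots,x_n]]\to R$ is a surjective ring homomorphism, a power series is a unit if and only if its constant term is a unit, whence $tr\big(A(x_1,\cdots,x_n)\big)\in U\big(R[[x_1,\cdots,x_n]]\big)$ if and only if $tr\big(A(0,\cdots,0)\big)\in U(R)$.

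For $(1)\Rightarrow(2)$, I would take $A(x_1,\cdots,x_n)\in M_2\big(R[[x_1,\cdots,x_n]]\big)$ with invertible trace. By the observation above, $A(0,\cdots,0)\in M_2(R)$ has invertible trace, so $(1)$ gives that $A(0,\cdots,0)$ is strongly rad-clean; then the implication $(3)\Rightarrow(1)$ of Theorem~\ref{Theorem 14} yields that $A(x_1,\cdots,x_n)$ is strongly rad-clean. For $(2)\Rightarrow(1)$, given $A\in M_2(R)$ with $tr(A)\in U(R)$, I would regard $A$ as the constant matrix in $M_2\big(R[[x_1,\cdots,x_n]]\big)$; its trace is the same element of $R\subseteq R[[x_1,\cdots,x_n]]$, hence still a unit, so $(2)$ makes this constant matrix strongly rad-clean, and the implication $(1)\Rightarrow(3)$ of Theorem~\ref{Theorem 14} returns that $A=A(0,\cdots,0)$ is strongly rad-clean.

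There is no serious obstacle here: the substance is entirely contained in Theorem~\ref{Theorem 14} (and Lemma~\ref{Lemma 13}, on which it rests). The closest thing to a step that needs verification rather than a citation is the trace/evaluation remark, and for $n>1$ this is merely the iterated form of the single-variable statement $\sum_i a_ix^i\in U\big(R[[x]]\big)\iff a_0\in U(R)$ that is already used implicitly in the determinant computation of Lemma~\ref{Lemma 13}.
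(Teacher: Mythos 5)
Your proof is correct and follows essentially the same route as the paper: both directions are reduced to Theorem~\ref{Theorem 14} via the observation that a power series (in particular the trace) is a unit if and only if its constant term is, with the paper dismissing $(2)\Rightarrow(1)$ as obvious where you supply the constant-matrix argument explicitly. The only difference is cosmetic — the paper's printed citation points at Theorem~\ref{Theorem 4} where it clearly intends the power-series theorem, which is exactly the reference you use.
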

\begin{proof} $(1)\Rightarrow (2)$ Let $A(x_1,\cdots,x_n)\in M_2\big(R[[x_1,\cdots,x_n]]\big)$ with invertible
trace. Then $trA(0,\cdots ,0)\in U(R)$. By hypothesis, $A(0,\cdots ,0)\in M_2(R)$ is
strongly rad-clean. In light of Theorem \ref{Theorem 4}, $A(x_1,\cdots,x_n)\in
M_2\big(R[[x_1,\cdots,x_n]])$ is strongly rad-clean.

$(2)\Rightarrow (1)$ is obvious.
\end{proof}

\vskip10mm ACKNOWLEDGEMENT

\vskip4mm The authors are grateful to the referee for his/her
helpful comments and suggestions which led to a much improved paper.

\end{document}